\documentclass[11pt,fleqn]{article}
\usepackage{amssymb}
\usepackage{amsfonts}
\usepackage{amsmath}
\usepackage{amsthm}
\usepackage{graphicx}
\usepackage{epsfig}
\usepackage{psfrag}
\usepackage{color}
\usepackage{dsfont}
\bibliographystyle{plain}
\makeatletter
\xdef\@endgadget#1{{\unskip\nobreak\hfil\penalty50\hskip1em\hbox{}\nobreak
    \hfil#1\parfillskip=0pt\finalhyphendemerits=0\par}}
\def\@qedsymbol{${}_\blacksquare$}
\def\qed{\@endgadget{\@qedsymbol}}
\newtheorem{lemma}{Lemma}[section]
\newtheorem{theorem}[lemma]{Theorem}
\newtheorem{corollary}[lemma]{Corollary}
\newtheorem{example}[lemma]{Example}

\newtheorem{proposition}[lemma]{Proposition}
\newtheorem{remark}[lemma]{Remark}
\newcommand{\mR}{\mathbb{R}}

\newcommand{\X}{\mathcal{X}}

\newcommand{\Y}{\mathcal{Y}}
\newcommand{\U}{\mathcal{U}}

\newcommand{\bq}{\begin{equation}}
\newcommand{\eq}{\end{equation}}
\newcommand{\bma}{\begin{bmatrix}}
\newcommand{\ema}{\end{bmatrix}}

\def\BibTeX{{\rm B\kern-.05em{\sc i\kern-.025em b}\kern-.08em
    T\kern-.1667em\lower.7ex\hbox{E}\kern-.125emX}}

\title{\LARGE \bf Maximum Power Transfer for Nonlinear State Space Systems}

\author{Arjan van der Schaft
\thanks{A.J. van der Schaft is with the Bernoulli Institute for Mathematics, Computer
Science and AI, Jan C. Willems Center for Systems and Control, University of Groningen, PO Box 407, 9700 AK, the
Netherlands,
        {\tt\small A.J.van.der.Schaft@rug.nl}}
}

\begin{document}

\maketitle
\thispagestyle{empty}
\pagestyle{empty}

\begin{abstract}
The classical Maximum Power Transfer theorem of linear electrical network theory is generalized to the setting of a nonlinear state space system connected to a source. This yields a state space version of the input-output operator results of \cite{wyatt}. Key tool in the analysis is the formulation of a Hamiltonian input-output system, which is closely related to Pontryagin's Maximum principle. The adjoint variational system incorporated in this system defines an optimal load. The structure of such an optimal load is investigated for classes of physical systems.
\end{abstract}
%
%\begin{IEEEkeywords}
%Power transfer, nonlinear state space systems, Hamiltonian input-output systems. 
%\end{IEEEkeywords}
%
%\section{Introduction}
\section{Introduction}
%\begin{figure}[t]
%\begin{center}
%%\includegraphics[width=0.6\columnwidth]{Thevenin}
%\vspace{-2cm}
%\caption{Th\'evenin equivalent network.}
%\label{Thevenin}
%%\vspace{-2cm}
%\end{center}
%\end{figure}
Consider a general nonlinear input-state-output system
\bq
\label{sigma}
\Sigma: \quad
\begin{array}{rcll}
\dot{x} & = & f(x,u), \qquad &x \in\X, \, u \in \U=\mR^m \\[2mm]
y & = & h(x,u), \qquad & y \in \Y=\mR^m.
\end{array}
\eq
Furthermore, consider any vector signal $y_S:[0,T] \to \mR^m$ for some $T>0$. Suppose we want to \emph{maximize} 
\bq
\label{max}
\int_0^T (y_S(t)- y(t))^\top u(t) dt, \quad x(0)=x_0,
\eq
over all input functions $u: [0,T] \to \mR^m$. 
In a physical system setting this corresponds to determining the \emph{optimal load} of a nonlinear \emph{Th\'evenin} or \emph{Norton} equivalent system, given by the impedance/admittance $\Sigma$ together with a source signal $y_S$. Here 'optimal' is understood as extracting the maximal energy on the time-interval $[0,T]$. In fact, in the Th\'evenin case, $\Sigma$ is an impedance given by an electrical network of the form
\bq
\begin{array}{rcll}
\dot{x} & = & f(x,I), \qquad &x \in \mR^n, \, I \in \mR^m, \\[2mm]
V & = & h(x,I), \qquad & V \in \mR^m,
\end{array}
\eq
with input vector $I$ (current), output vector $V$ (voltage), and internal state variables $x$ (charges and flux linkages), in \emph{series} interconnection with a voltage source signal $y_S=V_S:[0,T] \to \mR^m$. Alternatively, in the Norton case, one considers an impedance \eqref{sigma} with input $V$ and output $I$, in \emph{parallel} interconnection with a current source $y_S=I_S:[0,T] \to \mR^m$. In case the system is \emph{linear} and given by a transfer matrix, the solution to this problem is well-known, and is commonly referred to as the \emph{Maximum Power Transfer theorem}.

In the \emph{nonlinear} case this problem was addressed before in \cite{wyatt0, wyatt}, where however $\Sigma$ is represented by a nonlinear \emph{input-output operator} instead of a state space model as in \eqref{sigma}. In its turn, \cite{wyatt0,wyatt} continues the approach of \cite{desoer} in the linear case. Here, instead of finding the impedance of the optimal load directly, first the optimal current $\hat{I}$ (in the Th\'evenin case) is determined. Since the voltage across the load is equal to the difference of $y_S$ and the voltage delivered by $\Sigma$ with input $\hat{I}$, this indirectly defines the impedance of an optimal load. 

The main contribution of the present paper is to provide a \emph{state space} version of the results in \cite{wyatt0,wyatt}; thereby staying much closer to the physical models. Key tool is a Hamiltonian input-output system derived from $\Sigma$, which closely relates to Pontryagin's Minimum principle. It turns out that an optimal load is determined by the \emph{adjoint variational system} along an optimal trajectory. The structure of these adjoint variational systems, and therefore of optimal loads, will be analyzed for two physical system classes $\Sigma$ in Section \ref{special}.

\section{Main results}
Obviously, maximizing \eqref{max} is the same as \emph{minimizing} over $u: [0,T] \to \mR^m$ the expression
\bq
\label{min}
P_{x_0,y_S}(u) :=\int_0^T (y(t)- y_S(t))^\top u(t) dt, \quad x(0)=x_0,
\eq
where $y: [0,T] \to \mR^p$ is the output of \eqref{sigma} corresponding to fixed initial condition $x_0$. For ease of notation we will often abbreviate the notation $P_{x_0,y_S}(u)$ to $P(u)$. Minimization of \eqref{min} corresponds to an \emph{optimal control} problem, which is \emph{time-varying} in case $y_S$ is a non-trivial function of time $t$. In fact, Pontryagin's Minimum principle tells to consider the Hamiltonian
\bq
H(x,p,u,t) = p^\top f(x,u) + \left(h(x,u) -y_S(t)\right)^\top u,
\eq
and to write down the corresponding Hamiltonian equations, with mixed boundary conditions $x(0)=x_0$ and $p(T)=0$ (or first to add an extra state variable $t$). Instead, we will follow a \emph{different}, more direct, route, which is leading to closely related equations from a different point of view. This alternative route allows one to deal with arbitrary signals $y_S$, and to give explicit conditions for attaining a \emph{global minimum} (different from the ones provided by Pontryagin's Minimum principle). Furthermore, it provides the optimal input as the output of a system with input $y_S$.

The approach starts by defining along the solutions $(u,x,y)$ of \eqref{sigma} the \emph{variational systems}\footnote{The Jacobian matrix $\frac{\partial f}{\partial x^\top}(x,u)$ denotes the matrix with $(i,j)$-th element given by $\frac{\partial f_i}{\partial x_j}(x,u)$. Furthermore $\frac{\partial f^\top}{\partial x}(x,u)$ is the transposed matrix with $(i,j)$-th element equal to $\frac{\partial f_j}{\partial x_i}(x,u)$. Similarly for the other Jacobian matrices.} \cite{crouch}
\bq
\delta \Sigma: \quad \begin{array}{rcl}
\dot{\delta x} & = & \frac{\partial f}{\partial x^\top}(x,u) \delta x + \frac{\partial f}{\partial u^\top}(x,u)\delta u\\[2mm]
\delta y & = & \frac{\partial h}{\partial x^\top}(x,u) \delta x+ \frac{\partial h}{\partial u^\top}(x,u) \delta u
\end{array}
\eq
Note \cite{crouch} that the system $\Sigma$ together with its variational systems $\delta \Sigma$ admits a coordinate-free and global description with $(x,\delta x) \in T\X$, $(u,\delta u) \in T\U$, $(y,\delta y) \in T\Y$.

Furthermore define the \emph{adjoint variational systems}
\bq
\label{adjoint}
\delta^* \Sigma: \quad \begin{array}{rcl}
\dot{p} & = & -\frac{\partial f^\top}{\partial x}(x,u) p - \frac{\partial h^\top}{\partial x}(x,u) u_a\\[2mm]
y_a & = & \frac{\partial f^\top}{\partial u}(x,u) p + \frac{\partial h^\top}{\partial u}(x,u) u_a,
\end{array}
\eq
which are uniquely characterized \cite{crouch} by the property 
\bq
\label{unique}
\frac{d}{dt} p^\top (t) \delta x (t) = y_a^\top (t) \delta u(t) - u_a^\top (t) \delta y(t)
\eq
along all solutions $(u,x,y)$.
The system $\Sigma$ together with its adjoint variational systems $\delta^* \Sigma$ admits a coordinate-free and global description with $(x,p) \in T^*\X$, $(u,u_a) \in T^*\U$, $(y,y_a) \in T^*\Y$, cf. \cite{crouch}. 

Now consider the system $\Sigma$ \emph{together} with its adjoint variational systems $\delta^* \Sigma$, where additionally we let $u_a=u$ and define the output $y^+ := y + y_a$ ('parallel interconnection'). This amounts to the system
\bq
\label{+}
\Sigma^+: \quad 
\begin{array}{rcl} 
\dot{x} & = & \frac{\partial H^+}{\partial p}(x,p,u) \\[2mm]
\dot{p} & = & -\frac{\partial H^+}{\partial x}(x,p,u) \quad (x,p) \in T^*\X, \\[2mm]
y^+ & = & \frac{\partial H^+}{\partial u}(x,p,u),
\end{array}
\eq
where
\bq
H^+(x,p,u):= p^\top f(x,u) + u^\top h(x,u).
\eq
The system \eqref{+} is a \emph{Hamiltonian input-output (IO) system} in the sense of \cite{brockett,vds81}.
Note that the equations \eqref{+} for $y^+=0$ equal the \emph{first-order optimality conditions} provided by Pontryagin's Minimum principle for the cost criterium $u^\top h(x,u)$. Furthermore, because of the linearity of $H^+$ in $p$, it follows that \eqref{+} with mixed boundary conditions $x(0)=x_0, p(T)=0,$ has unique solutions  for every input function $u$.
If the partial Hessian matrix
\bq
\label{partialHessian}
\frac{\partial^2 H^+}{\partial u\partial u^\top}(x,p,u) 
\eq
has \emph{full rank}, then \eqref{+} admits an \emph{inverse}, which is given by another Hamiltonian IO system
\bq
\label{x}
\Sigma^\times: \quad 
\begin{array}{rcl} 
\dot{x} & = & \frac{\partial H^\times}{\partial p}(x,p,y^+) \\[2mm]
\dot{p} & = & -\frac{\partial H^\times}{\partial x}(x,p,y^+) \\[2mm]
u & = & - \frac{\partial H^\times}{\partial y^+}(x,p,y^+),
\end{array}
\eq
with inputs $y^+$ and outputs $u$. Its Hamiltonian $H^\times$ equals minus the \emph{partial Legendre transform} of $H^+(x,p,u)$ with respect to $u$, that is
\bq
\label{times}
\begin{array}{rcl}
H^\times(x,p,y^+)&:= &H^+(x,p,u) - u^\top y^+ \\[2mm]
&= &p^\top f(x,u) + u^\top \left( h(x,u) - y^+ \right),
\end{array}
\eq
where $u$ is solved as a function of $x,p,y^+$ from the equations $y^+  =  \frac{\partial H^+}{\partial u}(x,p,u)$. 
%This motivates the following standing assumption guaranteeing the existence of an inverse system \eqref{x}.
%\begin{assumption}
%\label{ass}
%The partial Hessian matrix \eqref{partialHessian} has full rank for all $(x,p,u)$.
%\end{assumption}

Note that in case of a \emph{linear system}
\bq
\label{sigmalin}
\begin{array}{rcll}
\dot{x} & = & Ax +Bu, \qquad &x \in\mR^n, u \in \mR^m \\[2mm]
y & = & Cx + Du, \qquad & y \in \mR^p,
\end{array}
\eq
the variational systems are \emph{equal} to the system itself (but with state $\delta x$, input $\delta u$, and output $\delta y$), while the adjoint variational systems are the adjoint system
\bq
\label{adjointlin}
\begin{array}{rcll}
\dot{p} & = & -A^\top p  - C^\top u_a, \qquad &p \in\mR^n, u_a \in \mR^p \\[2mm]
y_a & = & B^\top p + D^\top u_a, \qquad & y_a \in \mR^m.
\end{array}
\eq
Furthermore, in the linear case the system $\Sigma^+$ amounts to
\bq
\begin{array}{rcl} 
\label{+lin}
\dot{x} & = & \frac{\partial H^+}{\partial p}(x,p,u) = Ax + Bu \\[2mm]
\dot{p} & = & -\frac{\partial H^+}{\partial x}(x,p,u) = - A^\top p - C^\top u \\[2mm]
y^+ & = & \frac{\partial H^+}{\partial u}(x,p,u) = Cx + Du + B^\top p + D^\top u,
\end{array}
\eq
with $H^+(x,p,u)=p^\top (Ax+ Bu) + u^\top (Cx + Du)$. 
%Denoting the transfer matrix of the linear system \eqref{sigmalin} by $G(s)=C(Is -A)^{-1}B +D$, the transfer matrix of \eqref{+lin} is given by $G(s) + G^\top(-s)$.
System \eqref{+lin} admits an inverse if and only if the matrix $D+D^\top$ is invertible, in which case 
\bq
H^\times(x,p,y^+) = p^\top (Ax + Bu) + u^\top \left(Cx + Du - y^+ \right)
\eq
with $u=(D+D^\top)^{-1} \left(y^+ - Cx - B^\top p\right)$.

Now, consider as before for a fixed time-function $y_S: [0,T] \to \mR^m$ and initial condition $x_0$ the functional $P_{x_0,y_S}(u)$ defined in \eqref{min}.
%=\int_0^T (y(t)- y_S(t))^\top u(t) dt, \quad x(0)=x_0,
%\eq
%with $u: [0,T] \to \mR^m$, where $y: [0,T] \to \mR^m$ is the output function corresponding to $u$ and initial condition $x_0$.
Furthermore, consider an arbitrary \emph{variation} $\delta u: [0,T] \to \mR^m$ of $u$, together with the function
\bq
\bar{P}(\lambda):= P_{x_0,y_S}(u + \lambda \delta u), \quad \lambda \in [0,1].
\eq
Then compute
\bq
\label{dlambda}
\begin{array}{rcl} 
\frac{d}{d \lambda} \bar{P}(\lambda) &= &\int_0^T (y(t)- y_S(t))^\top \delta u(t) dt \\[2mm]
&&+ \int_0^T \delta y(t)^\top \left(u(t) + \lambda \delta u(t)\right) dt,
\end{array}
\eq
where $y(t)$ is the output of \eqref{sigma} resulting from $u + \lambda \delta u$ and $x(0)=x_0$, and $\delta y(t)$ is the variational output resulting from the variational input $\delta u$ along the trajectories corresponding to $u + \lambda \delta u$ and $x(0)=x_0$. Now recall the equality \eqref{unique}.
Substitution of $u_a(t)= u(t) + \lambda \delta u (t)$ and the mixed boundary conditions $\delta x(0)=0, p(T)=0$ into \eqref{unique}, yields
\bq
\int_0^T \delta y(t)^\top \left(u(t) + \lambda \delta u(t)\right) dt = \int_0^T y_a^\top (t) \delta u(t) dt
\eq
Together with \eqref{dlambda} this yields
\bq
\label{lambda}
\begin{array}{rcl} 
\frac{d}{d \lambda} \bar{P}(\lambda) &= &\int_0^T (y(t) + y_a(t)- y_S(t))^\top \delta u(t) dt \\[2mm]
&= &\int_0^T (y^+(t)- y_S(t))^\top \delta u(t) dt,
\end{array}
\eq
where $y^+(t)$ is the output of \eqref{+} for input $u + \lambda \delta u$ and $x(0)=x_0, p(T)=0$.
In particular
\bq
\label{lambda0}
\frac{d}{d \lambda} \bar{P}(0) = \int_0^T (y^+(t)- y_S(t))^\top \delta u(t) dt
\eq
where $y^+(t)$ is the output of \eqref{+} for input $u$ and $x(0)=x_0,p(T)=0$.
\begin{remark}
The same computation implies that the \emph{variational derivative} of $P_{x_0,y_S}(u)$ is the function
\bq
y^+(t)- y_S(t), \quad t \in [0,T],
\eq
where $y^+(t)$ is the output of \eqref{+} for input $u(t), t \in [0,T]$, and $x(0)=x_0, p(T)=0$.
\end{remark}

Now suppose the Hessian matrix \eqref{partialHessian} has full rank, guaranteeing the existence of the inverse system \eqref{x} to \eqref{+}.
Then associated to $y_S:[0,T] \to \mR^m$ there exists a unique corresponding input function $\hat{u}: [0,T] \to \mR^m$, given as the output of \eqref{x} for input $y^+=y_S$ for $x(0)=x_0, p(T)=0$. This yields a corresponding state trajectory $\hat{x}: [0,T] \to \X$ with $\hat{x}(0)=x_0$, and corresponding output $\hat{y}: [0,T] \to \mR^m$, of \eqref{sigma}. Since $\hat{u}$ ensures that $y^+(t)=y_S(t), t \in [0,T]$, the expression \eqref{lambda0} for $u=\hat{u}$ reduces to $\frac{d}{d \lambda} \bar{P}(0)=0$ for any variation $\delta u$. Hence the input function $\hat{u}$ is \emph{extremizing} $P_{x_0,y_S}(u)$. This is summarized as follows.
\begin{proposition}
Consider the system \eqref{sigma} and $P_{x_0,y_S}(u)$ given by \eqref{min}. Assume the Hessian matrix \eqref{partialHessian} has full rank, and consider the inverse system \eqref{x} of \eqref{+}. Then the output $\hat{u}$ of \eqref{x} for input $y_S$ with $x(0)=x_0, p(T)=0,$ is extremizing $P_{x_0,y_S}(u)$.
\end{proposition}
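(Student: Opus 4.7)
The plan is to compute the first variation of $P_{x_0,y_S}$ around an arbitrary candidate input $u$, and then to identify the input $\hat{u}$ that makes this first variation vanish against every admissible perturbation.

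First I would embed $u$ in a one-parameter family $u + \lambda \delta u$, and differentiate $\bar{P}(\lambda)$ in $\lambda$ at $\lambda = 0$. This produces two terms: the explicit contribution $\int_0^T (y - y_S)^\top \delta u \, dt$ from perturbing the multiplier, and the implicit contribution $\int_0^T \delta y^\top u \, dt$ coming from the dependence of $y$ on $u$ through \eqref{sigma}. Here $\delta y$ is the output of the variational system $\delta \Sigma$ with variational input $\delta u$ and $\delta x(0)=0$, since the initial state is held fixed at $x_0$.

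The decisive step is to rewrite $\int_0^T \delta y^\top u \, dt$ in a form that pairs linearly against $\delta u$. For this I would appeal to the defining duality identity \eqref{unique}. Integrating that identity over $[0,T]$ with $u_a=u$ yields
\bq
p(T)^\top \delta x(T) - p(0)^\top \delta x(0) = \int_0^T \left( y_a^\top \delta u - u^\top \delta y\right) dt,
\eq
and with the mixed boundary data $\delta x(0)=0$ and $p(T)=0$ the left-hand side vanishes. Substituting back gives
\bq
\frac{d}{d\lambda}\bar{P}(0) = \int_0^T (y + y_a - y_S)^\top \delta u \, dt = \int_0^T (y^+ - y_S)^\top \delta u \, dt,
\eq
where $y^+$ is the output of $\Sigma^+$ driven by $u$ with $x(0)=x_0$, $p(T)=0$. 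From here the conclusion follows directly: under the full-rank Hessian assumption on \eqref{partialHessian}, the inverse Hamiltonian IO system $\Sigma^\times$ is well defined, and driving it with input $y^+ = y_S$ subject to $x(0)=x_0$, $p(T)=0$ produces an input $\hat{u}$ for which $\Sigma^+$ returns exactly $y_S$. Then $\frac{d}{d\lambda}\bar{P}(0)=0$ for every variation $\delta u$, i.e.\ $\hat{u}$ extremizes $P_{x_0,y_S}$.

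The main point that requires some care is justifying the exchange of the boundary conditions between the forward equation on $x$ and the backward equation on $p$: the adjoint identity \eqref{unique} is only useful once we commit to $\delta x(0)=0$ and $p(T)=0$ simultaneously, and the well-posedness of the resulting two-point boundary value problem for $\Sigma^+$ (hence for $\Sigma^\times$ driven by $y_S$) is what makes $\hat{u}$ exist and be unique. The linearity of $H^+$ in $p$ noted just after \eqref{+} is exactly what secures this. Note that the argument only yields stationarity of $P_{x_0,y_S}$ at $\hat{u}$; identifying $\hat{u}$ as a genuine minimizer would be a separate second-order question and is not claimed by the proposition.
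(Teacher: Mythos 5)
Your proposal is correct and follows essentially the same route as the paper: compute $\frac{d}{d\lambda}\bar{P}(\lambda)$, use the adjoint duality \eqref{unique} with the boundary data $\delta x(0)=0$, $p(T)=0$ to turn $\int_0^T \delta y^\top u\,dt$ into $\int_0^T y_a^\top \delta u\,dt$, and conclude that $\hat{u}$ (the output of $\Sigma^\times$ driven by $y_S$) makes the first variation vanish. The only cosmetic difference is that you work directly at $\lambda=0$ whereas the paper carries the general $\lambda$ (substituting $u_a = u+\lambda\delta u$) before specializing; the content is identical.
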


In order to analyze when $\hat{u}$ \emph{globally minimizes} $P_{x_0,y_S}(u)$ we follow an argumentation similar to the one in \cite{wyatt}. First, let us recall the notion of \emph{incremental cylo-passivity}. A system \eqref{sigma} is \emph{incrementally passive}, cf. \cite{desoervidy, pass}, if given two copies $\Sigma^1$, $\Sigma^2$ of $\Sigma$ with inputs $u^1,u^2$, states $z^1,z^2$, and outputs $y^1,y^2$, there exists a nonnegative function $S(z^1,z^2)$ such that along the dynamics of the product system $\Sigma^1 \times \Sigma^2$
\bq
\frac{d}{dt}S(z^1,z^2) \leq (y^1 -y^2)^\top (u^1 - u^2).
\eq
This implies
\bq
\label{inq}
%\begin{array}{l}
\int_0^T \left[y^1(t) -y^2(t) \right]^\top \left[u^1(t) - u^2(t) \right] dt  \geq S(z^1(T),z^2(T))- S(z^1(0),z^2(0)).
%\end{array}
\eq
Since $\bar{P}(0) = P(\hat{u}), \bar{P}(1) = P(\hat{u} + \delta u)$, it follows that
\bq
P(\hat{u} + \delta u) - P(\hat{u}) = \int_0^1 \frac{d}{d \lambda} \bar{P}(\lambda) d \lambda.
\eq
Using again \eqref{lambda}, and writing $\lambda \delta u(t) = (\hat{u}(t) + \lambda \delta u(t)) - \hat{u}(t)$ this yields
%Then compute
%\bq
%\frac{d}{d \lambda} \bar{P}(\lambda) = \int_0^T (y(t)- y_S(t))^\top \delta u(t) dt + \int_0^T \delta y(t)^\top \left(\hat{u}(t) + \lambda \delta u(t)\right) dt
%\eq
%where $\delta y(t)$ is the variational output resulting from the variational input $ \delta u$ along the trajectories corresponding to $\hat{u} + \lambda \delta u$ and (of course) $x(0)=x_0$. Now recall the equality
%\bq
%\frac{d}{dt} p^\top (t) \delta x (t) = y_a^\top (t) \delta u(t) - u_a^\top (t) \delta y(t),
%\eq
%where we substitute $u_a(t)= \hat{u}(t) + \lambda \delta u (t)$, $\delta x(0)=0$, and $p(T)=0$, and thus
%\bq
%\int_0^T \delta y(t)^\top \left(\hat{u}(t) + \lambda \delta u(t)\right) dt = \int_0^T y_a^\top (t) \delta u(t) dt
%\eq
%with $y_a$ the corresponding solution of the adjoint variational system. Taken together this yields
%\bq
%\frac{d}{d \lambda} \bar{P}(\lambda) = \int_0^T (y(t) + y_a(t)- y_S(t))^\top \delta u(t) dt = \int_0^T (y^+(t)- y_S(t))^\top \delta u(t) dt
%\eq
%Now suppose $\hat{u}$ is 
\bq
\begin{array}{l}
P(\hat{u} + \delta u) - P(\hat{u})  = \int_0^1 \frac{1}{\lambda} \int_0^T (y^+(t)- y_S(t))^\top \lambda \delta u(t) dt \\[2mm]=
 \qquad \int_0^1 \frac{1}{\lambda} \int_0^T \left[y^+(t)- y_S(t)\right]^\top \left[(\hat{u}(t) + \lambda \delta u(t)) - \hat{u}(t) \right] dt.
\end{array}
\eq
This leads to the following theorem.
\begin{theorem}
Consider $\Sigma$ given by \eqref{sigma} with initial state $x(0)=x_0$, together with the system $\Sigma^+$ given by \eqref{+}, with mixed boundary conditions $x(0)=x_0$ and $p(T)=0$. Assume the Hessian matrix \eqref{partialHessian} has full rank and $\Sigma^+$ is incrementally passive with storage function $S(x^1,p^1,x^2,p^2)$ being zero at $(x_0,p_0,x_0,p_0)$ where $p_0=p(0)$. Consider an arbitrary source signal $y_S:[0,T] \to \mR^m$. Let $\hat{u}:[0,T] \to \mR^m$ be the output of the inverse system \eqref{x} for input $y^+=y_S$ and $x(0)=x_0$, $p(T)=0$. Then $\hat{u}$ is globally minimizing $P_{x_0,y_S}(u)$.
%for which there exists an input function $\hat{u}$ for $\Sigma^+$ reproducing $y^+(t)=y_S(t), t \in [0,T]$. Then $\hat{u}$ is minimizing $P(u)$.
\end{theorem}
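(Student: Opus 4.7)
The plan is to build on the identity derived in the computation immediately preceding the theorem,
\[
P(\hat{u}+\delta u)-P(\hat{u})=\int_0^1\frac{1}{\lambda}\int_0^T\bigl[y^+(t)-y_S(t)\bigr]^\top\bigl[(\hat{u}+\lambda\delta u)(t)-\hat{u}(t)\bigr]\,dt\,d\lambda,
\]
and to prove that the inner integral is nonnegative for every $\lambda\in(0,1]$. The outer $d\lambda$-integration will then preserve the sign, yielding $P(\hat{u}+\delta u)\geq P(\hat{u})$ for every admissible variation $\delta u$, which is the asserted global minimality of $\hat{u}$.

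For each fixed $\lambda$, I would view the inner integrand as the input-output pairing between two trajectories of $\Sigma^+$ on $[0,T]$ subject to the mixed boundary conditions $x(0)=x_0$, $p(T)=0$: a \emph{perturbed} copy driven by $u^1:=\hat{u}+\lambda\delta u$ producing $y^{+,1}=y^+$, and a \emph{reference} copy driven by $u^2:=\hat{u}$ producing $y^{+,2}=y_S$ (the latter being precisely the defining property of $\hat{u}$ via the inverse system $\Sigma^\times$). The inner integrand is then exactly $\int_0^T(y^{+,1}-y^{+,2})^\top(u^1-u^2)\,dt$, which the assumed incremental passivity of $\Sigma^+$ lower-bounds by
\[
S\bigl(x^1(T),0,x^2(T),0\bigr)-S\bigl(x_0,p^1(0),x_0,p_0\bigr).
\]
Here the terminal costates vanish by the boundary condition, $x^1(0)=x^2(0)=x_0$, and $p^2(0)=p_0$ by definition of $p_0$. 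Nonnegativity of $S$ then disposes of the terminal term, while the hypothesis $S(x_0,p_0,x_0,p_0)=0$ combined with the minimality of $S$ at the reference point handles the initial term, so the lower bound is $\geq 0$.

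The main obstacle lies in the initial-time bookkeeping: for general $\lambda$ and $\delta u$, the BVP-induced costate $p^1(0)$ of the perturbed copy does not coincide with the reference $p_0$, and so $S(x_0,p^1(0),x_0,p_0)$ is not literally zero. I expect to resolve this by recasting the comparison as an initial-value problem in which both copies are launched from the common state $(x_0,p_0)$, whereupon the initial storage equals $S(x_0,p_0,x_0,p_0)=0$ exactly; the identity \eqref{unique} derived from the adjoint-variational system, together with the vanishing of its boundary terms, can be used to match the resulting $y^+$-pairing to the BVP-pairing in \eqref{lambda}. Once the inner integral has been shown to be nonnegative for each $\lambda$, integrating in $\lambda$ completes the proof of global minimality.
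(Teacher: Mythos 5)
Your first two paragraphs reproduce the paper's proof exactly: the paper applies the incremental-passivity inequality \eqref{inq} to the two copies of $\Sigma^+$ driven by $u^1=\hat{u}+\lambda\delta u$ (with output $y^+$) and $u^2=\hat{u}$ (with output $y_S$, by definition of $\hat{u}$ via \eqref{x}), concludes that the inner $t$-integral is nonnegative for each $\lambda$, and integrates over $\lambda$. Same decomposition, same key inequality.

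The subtlety you isolate in your final paragraph is real: the perturbed copy is a boundary-value problem with $x^1(0)=x_0$, $p^1(T)=0$, so $p^1(0)$ generally differs from $p_0$, and \eqref{inq} only yields the lower bound $S(x^1(T),0,x^2(T),0)-S(x_0,p^1(0),x_0,p_0)$, whose second term the stated hypothesis ($S$ vanishing at $(x_0,p_0,x_0,p_0)$) does not control. Be aware, however, that the paper's own proof is a single sentence invoking \eqref{inq} and does not address this either; on this point you are being more careful than the source, not less. Your proposed repair does not go through as sketched: relaunching the perturbed copy as an initial-value problem from $(x_0,p_0)$ changes its trajectory and hence its output $y^{+}$, and the identity \eqref{lambda} is derived from \eqref{unique} using \emph{both} $\delta x(0)=0$ and $p(T)=0$ to kill the boundary terms, so an initial-value launch leaves a nonvanishing term $p^\top(T)\delta x(T)$. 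Closing the gap honestly seems to require a stronger hypothesis on $S$ (e.g.\ that $S(x_0,p,x_0,p_0)=0$ for all $p$, or that $S$ depends only on the $x$-components at the initial time). With that caveat, your argument is the paper's argument.
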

\begin{proof} By incremental passivity $\int_0^T \left[y^+(t)- y_S(t)\right]^\top \! \! \left[(\hat{u}(t) + \lambda \delta u(t)) - \hat{u}(t) \right] \! dt\\ \geq 0$, see \eqref{inq}. Hence $P(\hat{u} + \delta u) - P(\hat{u}) \geq 0$ for any $\delta u$.
\end{proof}
%${}_\blacksquare$
Since a linear system is incrementally passive if and only if it is passive, this yields the following corollary.
 \begin{corollary}
If $\Sigma$ is a \emph{linear} system \eqref{sigmalin} then $\hat{u}$ is minimizing $P_{x_0,y_S}(u)$ if $D+D^\top$ is invertible and $\Sigma^+$ is passive.
\end{corollary}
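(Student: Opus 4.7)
The plan is to verify the hypotheses of the preceding Theorem in the linear setting and then invoke it directly. The only nontrivial point is that, for linear systems, passivity of $\Sigma^+$ upgrades to incremental passivity with an incremental storage function that vanishes on the diagonal.

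First I would check the Hessian condition. From $H^+(x,p,u) = p^\top(Ax+Bu) + u^\top(Cx+Du)$ one has by direct differentiation
\bq
\frac{\partial^2 H^+}{\partial u \partial u^\top}(x,p,u) = D + D^\top,
\eq
so the assumed invertibility of $D+D^\top$ is exactly the full-rank Hessian hypothesis of the Theorem, and the inverse system $\Sigma^\times$ of \eqref{+lin} is the one already displayed after \eqref{+lin} in the excerpt.

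Next I would argue the passivity-to-incremental-passivity step for the linear Hamiltonian IO system $\Sigma^+$ in \eqref{+lin}, which is itself linear in the composite state $z=(x,p)$. Let $S(z) \geq 0$ be a storage function realising passivity of $\Sigma^+$; since the system is linear and passive one may take $S$ quadratic and nonnegative (e.g.\ via the KYP lemma applied to the realization \eqref{+lin}). Given two trajectories $(z^1,u^1,y^{+,1})$ and $(z^2,u^2,y^{+,2})$ of $\Sigma^+$, linearity of \eqref{+lin} implies that the difference $(z^1-z^2, u^1-u^2, y^{+,1}-y^{+,2})$ is again a trajectory of the same system. Applying the dissipation inequality for $S$ along this difference trajectory yields
\bq
\frac{d}{dt} S(z^1 - z^2) \leq (y^{+,1}-y^{+,2})^\top (u^1 - u^2),
\eq
so $\tilde S(z^1,z^2) := S(z^1 - z^2)$ is an incremental storage function for $\Sigma^+$. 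Moreover $\tilde S$ vanishes on the whole diagonal $z^1=z^2$, hence in particular at $(x_0,p_0,x_0,p_0)$ for any choice of initial adjoint state $p_0=p(0)$.

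With the Hessian condition and incremental passivity of $\Sigma^+$ (with storage vanishing at the required diagonal point) both in hand, the Theorem applies verbatim and gives that the output $\hat u$ of the inverse system \eqref{x} driven by $y^+ = y_S$ under the mixed boundary conditions $x(0)=x_0$, $p(T)=0$ globally minimizes $P_{x_0,y_S}(u)$. The main potential obstacle is the production of a nonnegative quadratic $S$ realising passivity of the linear $\Sigma^+$; this is exactly where one uses the standard equivalence, for linear time-invariant systems, between external passivity and the existence of a quadratic storage function, after which the incremental lifting via linearity is automatic.
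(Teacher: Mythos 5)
Your proposal is correct and follows essentially the same route as the paper, whose entire argument is the one-line remark that a linear system is passive if and only if it is incrementally passive, after which the Theorem applies; you simply make explicit the standard difference-trajectory construction $\tilde S(z^1,z^2)=S(z^1-z^2)$ and the identification of the Hessian with $D+D^\top$. No gaps.
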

%Note that minimizing $P(u) =\int_0^T (y(t)- y_S(t))^\top u(t) dt$ is the same as \emph{maximizing} $-P(u) =\int_0^T (y_S(t)- y(t))^\top u(t) dt$ over all $u$.

Thus if the IO Hamiltonian system \eqref{+} has an \emph{inverse}, which is \emph{incrementally passive}, then for any output signal $y_S$ there is a \emph{unique} $\hat{u}$ that is globally minimizing $P_{x_0,y_S}(u)$. This unique $\hat{u}$ is solving the optimal load problem for the Th\'evenin equivalent system defined by $\Sigma$ and $y_S$ in the following sense. Note that if $u=\hat{u}$ is the input to \eqref{+} then its output is $y^+(t)=y_S(t), t \in [0,T]$. This means that
\bq
\begin{array}{rcl}
y_S(t) &=&\frac{\partial H^+}{\partial u}(x,p,u) = \frac{\partial}{\partial u} \left(p^\top f(x,u) + u^\top h(x,u) \right)\\[2mm]
&=&\frac{\partial f^\top}{\partial u}(x,u)p + \frac{\partial h^\top}{\partial u}(x,u)u + h(x,u) 
\end{array}
\eq
Thus an \emph{optimal load} is given by the adjoint variational system \eqref{adjoint} with $u_a=u$
\bq
\label{load}
\begin{array}{rcl}
\dot{p} & = & -\frac{\partial f^\top}{\partial x}(x,u) p - \frac{\partial h^\top}{\partial u}(x,u) u, \quad p(T)=0 \\[2mm]
y_L & = & \frac{\partial f^\top}{\partial u}(x,u) p + \frac{\partial h^\top}{\partial u}(x,u) u,
\end{array}
\eq
with input $u$ equal to the input of the system \eqref{sigma}. In view of the terminal condition $p(T)=0$ this system is \emph{acausal}. Furthermore, in general, it depends both on $p$ \emph{and} on $x$. In the case of a \emph{linear system} \eqref{sigmalin} it reduces to \eqref{adjointlin} with $u_a=u$ and $p(T)=0$, only depending on $p$.
\begin{example}
\end{example}
Let $\Sigma$ be a \emph{static} nonlinearity $y=h(u)$ with $u,y \in \mR^m$. Then an optimal load is given by the static nonlinearity
\bq
\label{optload}
y_L= \frac{\partial h^\top}{\partial u}(u) u,
\eq
while the assumption of invertibility amounts to the mapping
\bq
u \mapsto h(u) + \frac{\partial h^\top}{\partial u}(u) u
\eq
being invertible. This was already derived in \cite{wyattchua}.
%In case the static nonlinearity is generated by a (Rayleigh) potential function $R:\mR^m \to \mR$, i.e., $h$ is given as
%\bq
%h(u) = \frac{\partial R}{\partial u}(u),
%\eq
%then also the optimal load \eqref{optload} admits a potential function. In fact, denoting by $R^*$ the Legendre transform of $R$ and using the involution property $\frac{\partial R^*}{\partial u^*}(\frac{\partial R(u)}{\partial u})=u$ of the Legendre transform, this optimal load equals 
%\bq
%\frac{\partial^2 R}{\partial u \partial u^\top}(u)u = \frac{\partial}{\partial u} \left[R^*(\frac{\partial R}{\partial u}(u))\right]
%\eq
%and thus admits the potential $R^*(\frac{\partial R}{\partial u}(u))$.
%
\begin{example}
\label{ex1}
Consider the linear RC electrical network
\bq
\dot{Q}=I, \quad V=\frac{Q}{C} +RI,
\eq
composed of a capacitor with charge $Q$ and capacitance $C$ in series interconnection with a resistor with resistance $R$. Let the RC network be in series interconnection with a voltage source $V_S$. Then an optimal load is given by the adjoint system
\bq
\dot{p}= - \frac{I}{C}, \; p(T)=0, \quad V_L= p + RI
\eq
Defining $z:=-Cp$ this corresponds to the $RC$ electrical network
\bq
\label{RC}
\dot{z}=I, \; z(T)=0, \quad V_L=\frac{z}{-C} +RI,
\eq
with \emph{negative} capacitance $-C$ and resistance $R$. Note that \eqref{RC} is \emph{not} passive, but only \emph{cyclo-passive} \cite{pass} with \emph{negative} storage function $- \frac{1}{2C}z^2$.
\end{example}

This simple example already illustrates some of the implementation problems of an optimal load, even in the linear case. First, the system \eqref{RC} corresponds to a \emph{negative} capacitance. Second, the system \eqref{RC} is \emph{acausal}, because of the \emph{terminal} condition $z(T)=0$. Of course, these problems are already known from the classical Maximum Power Transfer theorem for linear electrical networks. 

On the other hand, as argued in \cite{desoer}, in principle there may be \emph{other} optimal loads, \emph{different} from \eqref{RC}, with other properties. Indeed, any system that is producing the \emph{same} output voltage $V_L$ as \eqref{RC} when fed by the optimal input current $\hat{I}$ is an optimal load. Furthermore, since $\hat{I}$ depends on the voltage source signal $V_S$, distinction should be made between a load that is optimal for \emph{every} $V_S$, such as \eqref{RC}, or a load that is optimal for one \emph{fixed} $V_S$. This holds for any linear system \eqref{sigmalin}: the adjoint system \eqref{adjointlin} with $u_a=u$ and $p(T)=0$ is an optimal load for \emph{every} $y_S$.

Similar reasoning applies to the nonlinear case. Any system that is producing the \emph{same} $y_L$ as \eqref{load} while fed by the input $u=\hat{u}$ corresponding to $y_S$ is an optimal load for the source signal $y_S$. The adjoint variational system with $u_a=u$ defines an optimal load for \emph{every} $y_S$, where, however, it should be mentioned that the adjoint variational system may depend (through $\hat{x}$) on $y_S$.
\begin{remark}
{\rm
Two directions for extension of the main results are the following. First, instead of imposing the terminal condition $p(T)=0$ on the adjoint variational system one may also compute the optimal input under the constraint of \emph{fixed} terminal condition $x(T)=x_T$. Indeed, this means that variations $\delta u$ should be such that $\delta x(T)=0$. This implies that also in this case $p^\top (T) \delta x (T) = 0$, and thus $\int_0^T y_a^\top (t) \delta u(t) dt = \int_0^T u_a^\top (t) \delta y(t) dt$. Hence \eqref{lambda} and subsequent statements remain unchanged. Furthermore, in this case the assumption of incremental passivity of $\Sigma^+$ may be weakened to incremental \emph{cyclo}-passivity \cite{pass}.

Second extension concerns invertibility of the partial Hessian matrix \eqref{partialHessian}, guaranteeing the existence of the inverse system \eqref{times} of \eqref{+}. If this assumption is not satisfied then one may still exploit the theory of inversion of nonlinear systems by restricting the boundary conditions on the state variables $x,p$. See e.g. \cite{ball} for such an approach in the context of Hamiltonian IO systems.
%Paper with Ball, eq. 48
}
\end{remark}

\section{Special system classes}
\label{special}
In this section we analyze two system classes for which the optimal load \eqref{load} inherits a special form.
\subsection{The linear case}
Example \ref{ex1} generalizes to any linear \emph{port-Hamiltonian system} \cite{pass}
\bq
\label{hamex}
\begin{array}{rcl}
\dot{x} & = & \left( J - R \right) Qx + Bu, \quad J=-J^\top, R=R^\top \geq 0 \\[2mm]
y & = & B^\top Qx +Du, \quad D=D^\top \geq 0,
\end{array}
\eq
where $Q=Q^\top$.
Its adjoint system, defining an optimal load, is given as
\bq
\begin{array}{rcl}
\dot{p} & = & -Q \left(-J - R \right) p - Q Bu_a \\[2mm]
y_a & = & B^\top p +D^\top u_a.
\end{array}
\eq
Define $z:=Q^{-1}p$ (assuming $Q$ invertible) one obtains the equivalent description
\bq
\label{ph}
\begin{array}{rcl}
\dot{z} & = & \left( -J - R \right) (-Q)z + (-B)u_a \\[2mm]
y_a & = & (-B)^\top (-Q)x +D^\top u_a,
\end{array}
\eq
which is again port-Hamiltonian, with $-Q,-J,-B,D^\top$, and same $R$. In particular, if the original port-Hamiltonian system \eqref{hamex} is passive (equivalently $Q\geq 0$), then the optimal load \eqref{ph} is cyclo-passive with nonpositive storage function. 
%Thus the optimal load for a linear port-Hamiltonian system is again port-Hamiltonian, however with Hamiltonian function equal to \emph{minus} the original Hamiltonian function. 
%
%\medskip

Similarly, consider a \emph{linear gradient system} \cite{gradient}
\bq
\begin{array}{rcl}
G\dot{x} & = & -Px + C^\top u \\[2mm]
y & = & Cx +Du,
\end{array}
\eq
with invertible $G=G^\top$ and $P=P^\top, D=D^\top$. Its adjoint system, defining an optimal load, is given as
\bq
\begin{array}{rcl}
\dot{p} & = & PG^{-1}p -C^\top u_a \\[2mm]
y_a & = & CG^{-1} p +D u_a.
\end{array}
\eq
Defining $z:=G^{-1}p$ one obtains the equivalent description
\bq
\begin{array}{rcl}
(-G) \dot{z} & = & -Pz + C^\top u_a \\[2mm]
y_a & = & Cz +D u_a,
\end{array}
\eq
which is again a gradient system, with the same $P$, $C$ and $D$, but with $-G$. 
%Thus, an optimal load is again a gradient system; however with opposite pseudo-inner product.

\subsection{The nonlinear case}
Next, consider a \emph{nonlinear} port-Hamiltonian system
\bq
\begin{array}{rcl}
\dot{x} & = & \left( J - R \right) \frac{\partial H}{\partial x}(x) + Bu \\[2mm]
y & = & B^\top \frac{\partial H}{\partial x}(x) + Du,
\end{array}
\eq
where we assume that $J,R,B$ and $D$ are all \emph{constant} matrices. 
Its adjoint variational systems are given as
\bq
\begin{array}{rcl}
\dot{p} & = & - \frac{\partial^2 H}{\partial x^2}(x) \left(-J - R \right) p -  \frac{\partial^2 H}{\partial x^2}(x) Bu_a \\[2mm]
y_a & = & B^\top p +D^\top u_a.
\end{array}
\eq
Define $z:=\left(\frac{\partial^2 H}{\partial x\partial x^\top}(x)\right)^{-1}p$ to obtain the equivalent form
\bq
\label{ham}
\begin{array}{rcl}
\dot{z} & = & \left( -J - R \right) \left(-\frac{\partial^2 H}{\partial x\partial x^\top}(x)\right)z + (-B)u_a \\[2mm]
y_a & = & (-B)^\top \left(-\frac{\partial^2 H}{\partial x\partial x^\top}(x)\right)z +D^\top u_a,
\end{array}
\eq
which is again port-Hamiltonian, with structure matrices $-J,-B$, and $R$ and Hamiltonian $-\frac{1}{2}z^\top \frac{\partial^2 H}{\partial x\partial x^\top}(x) z$. If the original port-Hamiltonian system is passive (i.e., $H \geq 0$), then the optimal load \eqref{ham} is cyclo-passive with nonpositive Hamiltonian function.

Analogously, consider a \emph{nonlinear} gradient system \cite{gradient}
\bq
\begin{array}{rcl}
G\dot{x} & = & -\frac{\partial V}{\partial x}(x,u) \\[2mm]
y & = & - \frac{\partial V}{\partial u}(x,u),
\end{array}
\eq
where we assume that the pseudo-Riemannian metric matrix $G$ is \emph{constant} (in particular, the metric is flat). Its adjoint system, defining an optimal load, is given as
\bq
\begin{array}{rcl}
\dot{p} & = & -\frac{\partial^2 V}{\partial x\partial x^\top}(x,u)G^{-1}p -  \frac{\partial^2 V}{\partial x \partial u^\top}(x,u)u_a \\[2mm]
y_a & = & -\frac{\partial^2 V}{\partial u \partial x^\top}(x,u)G^{-1} p - \frac{\partial^2 V}{\partial u\partial u^\top}(x,u)u_a
\end{array}
\eq
Defining $z:=G^{-1}p$ one obtains the equivalent description
\bq
\begin{array}{rcl}
(-G) \dot{z} & = & -\frac{\partial^2 V}{\partial x\partial x^\top}(x,u)z - \frac{\partial^2 V}{\partial x \partial u^\top}(x,u) u_a \\[2mm]
y_a & = & -\frac{\partial^2 V}{\partial u \partial x^\top}(x,u)z - \frac{\partial^2 V}{\partial u\partial u^\top}(x,u)u_a,
\end{array}
\eq
defining a gradient system with $-G$ and potential function
\bq
\frac{1}{2} \bma z & u_a \ema^\top 
\bma \frac{\partial^2 V}{\partial x\partial x^\top}(x,u) & \frac{\partial^2 V}{\partial x \partial u^\top}(x,u) \\[2mm]
 \frac{\partial^2 V}{\partial u \partial x^\top}(x,u)     & \frac{\partial^2 V}{\partial u \partial u^\top}(x,u) \ema                              
\bma z \\[2mm] u_a \ema
\eq

\section{Conclusions and outlook}
The Hamiltonian input-output system \eqref{+} turns out to be crucial for the computation of the optimal input and an optimal load. It is of interest to obtain more explicit conditions for its incremental passivity. 

The optimal load provided by the adjoint variational system has been investigated for port-Hamiltonian and gradient systems; however only under the restrictive condition of constant structure matrices and constant pseudo-Riemannian metric.

A major limitation of the results in this paper, as well as in \cite{wyatt}, is the assumption that the system, as seen from the load, can be formulated as an input-state-output system connected to a source. Such an assumed structure is similar to that of a Th\'evenin or Norton equivalent network in electrical network theory. However, the derivation of these equivalent networks involves superposition, and so does not necessarily apply to a general network of nonlinear impedances and sources. Moreover, the assumed splitting of the power variables into inputs and outputs may be restrictive. Recent results in \cite{kron} on Kron reduction of networks of nonlinear resistors could provide a starting point for extensions to general network structures.

\medskip

{\bf Acknowledgement} The work in this paper was stimulated by discussions with John V. Ringwood (Maynooth University, Dublin).

\end{document}